\theoremstyle{definition}
\newtheorem{theorem}{Theorem}[section]
\newtheorem{definition}[theorem]{Definition}
\newtheorem{lemma}[theorem]{Lemma}
\newtheorem{question}[theorem]{Question}
\newtheorem{corollary}[theorem]{Corollary}
\theoremstyle{remark}
\newtheorem{example}[theorem]{Example}
\numberwithin{equation}{section}
\begin{document}

\title[Remarks on minimal hypersurfaces]{Remarks on minimal hypersurfaces in shrinking gradient Ricci solitons}

\author[Y. Sun and G. Zhu]{Yukai Sun and Guangrui Zhu}

\dedicatory{Dedication to Hamilton}

\begin{abstract}
In this paper, we prove that any compact 2-sided smooth stable minimal hypersurface in a shrinking gradient Ricci soliton $(M^{n},g,f)$ with scalar curvature $R\geq(n-1)\lambda$ must have vanished second fundamental form and vanished normal Ricci curvature. For shrinking gradient Ricci solitons with scalar curvature $R\geq(n-1)\lambda$, the existence of an area-minimizing hypersurface would imply that $M$ is splitting.
\end{abstract}

\maketitle

	\section{Introduction}
	
	A smooth compact(without boundary) minimal hypersurface $\Sigma$ in a Riemannian manifold $(M,g)$ is stable if for any $\phi\in C^{\infty}(\Sigma)$ such that
    \[\int_{\Sigma}|\nabla^{\Sigma}\phi|^2-(\operatorname{Ric}(\nu,\nu)+|A|^2)\phi^2\geq 0,\]
    where $\nabla^{\Sigma}$ is the connection on $\Sigma$ induced from $(M,g)$, $\operatorname{Ric}$ is the Ricci curvature of $M$, $\nu$ is the outwards pointing unit normal vector of $\Sigma$ and $A$ is the fundamental form of $\Sigma$. 
    
    Fischer-Colbrie-Schoen \cite{FR1980}(see also Schoen-Yau \cite{SY1982}) obtained a classification for the stable minimal hypersurface in complete three-dimensional manifolds with nonnegative scalar curvature. Chodosh-Li-Stryker\cite{CLS2024} showed that the stable minimal hypersurface $\Sigma$ is totally geodesic and $\operatorname{Ric}(\nu,\nu)=0$ in four-dimensional manifolds with scalar curvature bounded from below by a positive constant, nonnegative sectional curvature, and weakly bounded geometry. Hong-Yan \cite{HY2024} also found that the stable minimal hypersurface $\Sigma$ is totally geodesic and $\operatorname{Ric}(\nu,\nu)=0$ in five-dimensional Riemannian manifolds under some curvature conditions. 
    
    We aim to study compact minimal hypersurfaces in gradient Ricci solitons with scalar curvature bounded from below. First, we recall the definition of gradient Ricci solitons.
	\begin{definition}
		A gradient Ricci soliton, denoted by $(M,g,f)$, is a smooth $n$-dimensional complete Riemannian manifold $(M,g)$ equipped with a smooth function $f:M\to\mathbb{R}$, satisfying
		\begin{equation}\label{eqn-1}
			\operatorname{Ric}+\operatorname{Hess}f=\lambda g
		\end{equation}
		for some $\lambda\in\mathbb{R}$,
		where $\operatorname{Ric}$ is the Ricci tensor and $\operatorname{Hess} f$ is the Hessian of $f$.

        If $\operatorname{Hess}f$ is not equal to zero on $M$, we call the gradient Ricci soliton $(M,g,f)$ nontrivial.
	\end{definition}
	
	When $\lambda>0$, $\lambda=0$, or $\lambda<0$, the gradient Ricci soliton $(M,g,f)$ is referred to as a shrinking, steady, or expanding gradient Ricci soliton, respectively. A gradient Ricci soliton can be viewed as a special structure on a Riemannian manifold that relates the Ricci curvature and metric by a function and is of great importance in the study of singularities in Ricci flow \cite{CB2023, HR1995}. Therefore, we can ask whether a compact stable minimal hypersurface has vanishing second fundamental form and vanishing normal Ricci curvature under the assumption of a lower bound on scalar curvature. Indeed, we can demonstrate the following theorem:
	\begin{theorem}\label{Thm-main}
		Any smooth compact 2-sided stable minimal hypersurface is totally geodesic with vanishing normal Ricci curvature in a nontrivial shrinking gradient Ricci soliton $(M^n, g, f)$ with scalar curvature $R\geq (n-1)\lambda$.
	\end{theorem}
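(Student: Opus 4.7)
The plan is to test the stability inequality with the constant function $\phi\equiv 1$, convert the resulting integral of $\operatorname{Ric}(\nu,\nu)$ into a scalar-curvature integral using the soliton identity \eqref{eqn-1}, and then extract both conclusions (totally geodesic and $\operatorname{Ric}(\nu,\nu)\equiv 0$) from the ensuing equality case.

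Concretely, since $\Sigma$ is closed, $\phi\equiv 1$ is admissible, and stability gives $\int_{\Sigma}(\operatorname{Ric}(\nu,\nu)+|A|^2)\leq 0$. Rewriting $\operatorname{Ric}(\nu,\nu)=\lambda-\operatorname{Hess} f(\nu,\nu)$ via \eqref{eqn-1} and invoking the standard submanifold identity
\[
\operatorname{Hess}^M f(\nu,\nu)=\Delta^M f-\Delta^{\Sigma}(f|_{\Sigma})+H\,\langle\nabla f,\nu\rangle,
\]
which collapses (since $H=0$) to $\operatorname{Hess}^M f(\nu,\nu)=\Delta^M f-\Delta^{\Sigma}(f|_{\Sigma})$, and then tracing \eqref{eqn-1} to obtain $\Delta^M f=n\lambda-R$, I would integrate over the closed $\Sigma$ to kill the intrinsic-Laplacian term and arrive at
\[
\int_{\Sigma}\operatorname{Ric}(\nu,\nu)=\int_{\Sigma}\bigl(R-(n-1)\lambda\bigr).
\]

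Combining this with the stability inequality yields $0\geq\int_{\Sigma}\bigl(R-(n-1)\lambda\bigr)+\int_{\Sigma}|A|^2$. Both integrands are pointwise nonnegative by the hypothesis $R\geq(n-1)\lambda$, so each must vanish identically: in particular $|A|\equiv 0$ (so $\Sigma$ is totally geodesic) and $R|_{\Sigma}\equiv(n-1)\lambda$. The stability inequality is then saturated by $\phi\equiv 1$, which means $1$ is a first eigenfunction of the Jacobi operator $L=-\Delta^{\Sigma}-(\operatorname{Ric}(\nu,\nu)+|A|^2)$ with eigenvalue zero. The Euler-Lagrange equation $L(1)=-(\operatorname{Ric}(\nu,\nu)+|A|^2)=0$ then forces $\operatorname{Ric}(\nu,\nu)+|A|^2\equiv 0$ pointwise, and combining with $|A|\equiv 0$ yields the desired $\operatorname{Ric}(\nu,\nu)\equiv 0$.

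No step looks genuinely hard: the submanifold identity for $\operatorname{Hess}^M f(\nu,\nu)$ is essentially a one-line computation, and the rest is algebra sitting on top of stability. Interestingly, neither the dimension bound $3\leq n\leq 7$ nor the nontriviality of the soliton seems to enter the argument; I expect these to be present for compatibility with the companion splitting statement in the abstract, where $n\leq 7$ is the usual regularity range for area-minimizing currents and the Einstein case ($f$ constant) is geometrically degenerate — indeed, then $\operatorname{Ric}(\nu,\nu)=\lambda>0$ would contradict the conclusion and rule out such $\Sigma$ outright.
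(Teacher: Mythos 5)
Your proposal is correct and follows essentially the same route as the paper: test stability with $\phi\equiv 1$, trade $\operatorname{Ric}(\nu,\nu)$ for $\lambda-\operatorname{Hess}f(\nu,\nu)$ via the soliton equation, use the traced identity $\Delta f=n\lambda-R$ together with $\operatorname{Hess}f(\nu,\nu)=\Delta f-\Delta_\Sigma f$ (the sign of the $H$-term you wrote differs from the paper's, but it is irrelevant since $H=0$), and then extract $A\equiv 0$ and $\operatorname{Ric}(\nu,\nu)\equiv 0$ from the equality case by observing that $\phi\equiv 1$ minimizes the Rayleigh quotient of the Jacobi operator. Your closing remark about the role of $3\leq n\leq 7$ and nontriviality also matches the paper's framing.
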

    
    We can also obtain an analogous result for gradient steady and expanding Ricci solitons under a similar condition as in Theorem \ref{Thm-main}. However, for steady and expanding gradient Ricci solitons, Munteanu-Wang\cite{WM2023} have proved that: 
    \begin{theorem}[Theorem 3.1\cite{WM2023}]\label{thm-WM1}
        Let $(M^{n},g,f)$ be a steady Ricci soliton. Assume that there exists a smooth compact embedded minimal hypersurface $\Sigma$ in $M$. Then $(M,g)$ splits isometrically as a direct product $\Sigma\times \mathbb{R}$.
    \end{theorem}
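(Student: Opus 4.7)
The plan is to show that $\nabla f$ is a nowhere-vanishing parallel vector field on $M$ everywhere orthogonal to $\Sigma$; once this is established, the de Rham splitting theorem forces $M$ to split isometrically as $\Sigma\times\mathbb{R}$, with $\nabla f$ generating the $\mathbb{R}$-factor. I would organize the argument in three stages: first, extracting pointwise geometric information on $\Sigma$ from the soliton equation and the minimality of $\Sigma$; second, propagating it to all of $M$ via a strong maximum principle for $R$; third, concluding via de Rham.

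For the first stage I would exploit the steady-soliton identities $\operatorname{Hess}f=-\operatorname{Ric}$ and $\Delta f=-R$, Hamilton's conservation law $R+|\nabla f|^2\equiv C$, and Chen's theorem $R\geq 0$ for complete steady gradient Ricci solitons. Since $H=0$ on $\Sigma$, a direct computation gives
\[
\Delta_\Sigma f \;=\; \operatorname{tr}_\Sigma\operatorname{Hess}f \;=\; -\operatorname{tr}_\Sigma\operatorname{Ric} \;=\; \operatorname{Ric}(\nu,\nu)-R,
\]
and integrating over the closed $\Sigma$ yields $\int_\Sigma \operatorname{Ric}(\nu,\nu)\,dA=\int_\Sigma R\,dA$. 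Combining this with the Gauss equation $R_\Sigma=R-2\operatorname{Ric}(\nu,\nu)-|A|^2$ and with $R\geq 0$, $|A|^2\geq 0$, I would run a second-variation analysis along the flow of $\Sigma$ by $\nabla f$ (whose normal component equals $v=\langle\nabla f,\nu\rangle$) to promote the integrated identity to pointwise conclusions on $\Sigma$: namely $A\equiv 0$, $R\equiv 0$, and $\nabla f$ is normal to $\Sigma$ with constant length. In particular, $\Sigma$ is totally geodesic and coincides with a regular level set of $f$.

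In the second stage I would invoke the steady-soliton evolution identity $\Delta_f R=-2|\operatorname{Ric}|^2\leq 0$, where $\Delta_f=\Delta-\langle\nabla f,\nabla\cdot\rangle$. Since $R\geq 0$ on $M$ and $R\equiv 0$ on the hypersurface $\Sigma$, the strong minimum principle for $f$-superharmonic functions forces $R\equiv 0$ on all of $M$. Plugging back gives $|\operatorname{Ric}|^2\equiv 0$, hence $\operatorname{Ric}\equiv 0$, and the soliton equation reduces to $\operatorname{Hess}f\equiv 0$. Thus $\nabla f$ is a parallel vector field on $M$, nonzero because $|\nabla f|^2\equiv C>0$ by Hamilton's identity (the degenerate case $C=0$ makes $f$ constant and $M$ Ricci-flat, which is handled separately or excluded directly by the presence of a compact minimal hypersurface with $\nabla f\perp\Sigma$).

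Finally, the complete $M$ carries the unit parallel vector field $\nabla f/|\nabla f|$, orthogonal to $\Sigma$ at every point of $\Sigma$. De Rham's splitting theorem then produces an isometric product decomposition $M=N\times\mathbb{R}$ with the $\mathbb{R}$-factor generated by $\nabla f$; because $\Sigma$ is complete, totally geodesic, and orthogonal to the $\mathbb{R}$-factor, it must coincide with a slice $N\times\{t_0\}$, giving the required $M=\Sigma\times\mathbb{R}$. The crux, and the principal obstacle, is the first stage: turning the single integrated identity $\int_\Sigma\operatorname{tr}_\Sigma\operatorname{Ric}\,dA=0$ into pointwise vanishing of $A$ and $R$ along $\Sigma$, since $R_\Sigma$ carries no a priori sign and stability of $\Sigma$ is not assumed; one must squeeze the required pointwise control out of Chen's $R\geq 0$, the conservation law $R+|\nabla f|^2\equiv C$, and the specific structure of $\nabla f$ along $\Sigma$.
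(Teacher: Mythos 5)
First, note that the paper you are working from does not prove this statement at all: it is quoted verbatim as Theorem~3.1 of \cite{WM2023} (Munteanu--Wang), so there is no internal proof to compare against, and your proposal has to stand on its own.

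On its own it does not close. Your Stages 2 and 3 are correct and standard: Chen's theorem gives $R\geq 0$, the identity $\Delta_f R=-2|\operatorname{Ric}|^2\leq 0$ and the strong minimum principle show that $R=0$ at a single point forces $\operatorname{Ric}\equiv 0$, hence $\operatorname{Hess}f\equiv 0$, and then (when $f$ is nonconstant) de~Rham splits off the line generated by $\nabla f$; moreover $\Delta_\Sigma f=\operatorname{tr}_\Sigma\operatorname{Hess}f=0$ makes $f|_\Sigma$ constant, so $\Sigma$ sits inside a slice and equals it. In other words, the entire theorem reduces to producing \emph{one} point of $\Sigma$ where $R=0$, i.e.\ to ruling out $R>0$ everywhere. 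That is exactly your Stage~1, and there you have no argument: the only identity you actually derive is $\int_\Sigma\bigl(\operatorname{Ric}(\nu,\nu)-R\bigr)=0$, whose integrand has no sign for a general steady soliton, and the ``second-variation analysis along the flow of $\Sigma$ by $\nabla f$'' that is supposed to upgrade it to pointwise vanishing of $A$ and $R$ is not available: the theorem assumes only that $\Sigma$ is minimal, not stable or area-minimizing, so the second variation of area carries no sign and cannot be quoted. You candidly label this step ``the crux and the principal obstacle,'' which is an accurate self-assessment --- but it means the proposal is a plan with its central lemma missing, not a proof. (Contrast this with Theorem~\ref{Thm-main} of the present paper, where stability is assumed precisely so that the $\phi\equiv 1$ test function does this work.) A secondary, unaddressed point: in the degenerate case $C=0$ the potential $f$ is constant and $M$ is Ricci-flat, and your parenthetical exclusion via ``$\nabla f\perp\Sigma$'' is vacuous since $\nabla f=0$; a flat torus containing a subtorus shows the conclusion can genuinely fail there, so whatever implicit nontriviality hypothesis Munteanu--Wang use must be invoked explicitly.
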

        \begin{theorem}[Theorem 3.2\cite{WM2023}]\label{thm-WM2}
        Let $(M^{n},g,f)$ be an expanding Ricci soliton with scalar curvature $R\geq \lambda(n-1)$. Assume that there exists a smooth compact embedded minimal hypersurface $\Sigma$ in $M$. Then $(M,g)$ splits isometrically as a direct product $\Sigma\times \mathbb{R}$.
    \end{theorem}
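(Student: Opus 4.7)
The plan is to exploit the stability inequality with the constant test function $\phi\equiv 1$, rewrite the resulting integrand in terms of the ambient scalar curvature using the soliton structure, and then upgrade an integral identity to a pointwise one through a principal-eigenvalue argument.

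First I would substitute $\phi\equiv 1$ into the stability inequality to obtain $\int_\Sigma(\operatorname{Ric}(\nu,\nu)+|A|^2)\le 0$. By the soliton equation \eqref{eqn-1}, $\operatorname{Ric}(\nu,\nu)=\lambda-\operatorname{Hess}f(\nu,\nu)$. Minimality of $\Sigma$ yields the standard decomposition $\Delta_M f=\Delta_\Sigma f+\operatorname{Hess}f(\nu,\nu)$ of the ambient Laplacian on a hypersurface, and tracing \eqref{eqn-1} gives $\Delta_M f=n\lambda-R$. Combining these,
\[\operatorname{Ric}(\nu,\nu)=R-(n-1)\lambda+\Delta_\Sigma f\quad\text{on }\Sigma.\]
Integrating over the compact $\Sigma$ kills the Laplacian term, so the stability inequality becomes
\[\int_\Sigma\bigl[R-(n-1)\lambda+|A|^2\bigr]\le 0.\]
Both integrands are nonnegative under the hypothesis $R\ge(n-1)\lambda$, so each must vanish pointwise: $|A|\equiv 0$ (so $\Sigma$ is totally geodesic) and $R\equiv(n-1)\lambda$ on $\Sigma$.

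To conclude $\operatorname{Ric}(\nu,\nu)\equiv 0$, I use $R=(n-1)\lambda$ in the displayed identity, which collapses to $\operatorname{Ric}(\nu,\nu)=\Delta_\Sigma f$ and in particular gives $\int_\Sigma\operatorname{Ric}(\nu,\nu)=0$. Since $|A|=0$, stability now says that the first eigenvalue of the Jacobi-type operator $L:=-\Delta_\Sigma-\operatorname{Ric}(\nu,\nu)$ is $\ge 0$. On the other hand, the Rayleigh quotient at the positive constant $\phi\equiv 1$ equals $-\int_\Sigma\operatorname{Ric}(\nu,\nu)/|\Sigma|=0$, so $\lambda_1(L)=0$ and the constant function is a positive first eigenfunction of $L$. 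The eigenequation $L\cdot 1=0$ then forces $\operatorname{Ric}(\nu,\nu)\equiv 0$ pointwise.

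The main obstacle I expect is precisely this pointwise upgrade: the integral identity $\int_\Sigma\operatorname{Ric}(\nu,\nu)=0$ alone does not prevent sign changes, and it is only the combination of this with the full strength of stability, routed through the principal-eigenvalue characterization, that forces the pointwise conclusion. The nontriviality of $f$ does not enter the argument directly but ensures the theorem is non-vacuous, since in the trivial Einstein case ($f$ constant, $\lambda>0$) the analogous stability computation rules out any compact stable minimal hypersurface outright.
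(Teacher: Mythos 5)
There are two genuine gaps here, and both are structural rather than computational. First, the statement you are proving (Munteanu--Wang, Theorem 3.2) assumes only the existence of a smooth compact embedded \emph{minimal} hypersurface $\Sigma$; it does not assume $\Sigma$ is stable. Your entire argument is launched from the stability inequality with $\phi\equiv 1$, so it does not apply under the stated hypotheses: a general compact minimal hypersurface in an expanding soliton need not be stable, and the point of the Munteanu--Wang result is precisely that the soliton structure (the vector field $\nabla f$ and the identities it satisfies) substitutes for any second-variation input. What you have written is, almost verbatim, the proof of Theorem \ref{Thm-main} of the present paper, which is a different statement with a stability hypothesis and a weaker conclusion.

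Second, even if stability were granted, what you establish is only that $\Sigma$ is totally geodesic with $\operatorname{Ric}(\nu,\nu)\equiv 0$ on $\Sigma$. The theorem at hand concludes much more: that $(M,g)$ splits globally and isometrically as $\Sigma\times\mathbb{R}$. Passing from infinitesimal rigidity along a single hypersurface to a global product decomposition is the hard part; in this paper (Theorem \ref{thm-shrinking}) that step requires the stronger area-minimizing hypothesis, the constant-mean-curvature foliation of Lemma \ref{lem-CMC}, a $\mu$-bubble comparison to show the leaves are minimal, the lapse-function computation giving $\Phi^{\ast}(g)=\mathrm{d}r^2+g|_{\Sigma}$, and a continuation argument. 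None of this appears in your proposal, so the stated conclusion is never reached. A minor further point: for an expanding soliton $\lambda<0$, so your closing remark about the trivial Einstein case with $\lambda>0$ does not bear on this statement.
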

	
Now we consider compact area-minimizing hypersurfaces. We recall the definition of an area-minimizing hypersurface.
\begin{definition}
    Let $\Sigma$ be a compact hypersurface in a Riemannian manifold $(M,g)$. Suppose that the homology class $[\Sigma]\in H_{n-1}(M,\mathbb{Z})$ of $\Sigma$ is non-zero, i.e., $[\Sigma]\neq 0$ in $H_{n-1}(M,\mathbb{Z})$. Then $\Sigma$ is called an area-minimizing hypersurface  if 
    \[
    \operatorname{Area}(\Sigma)=\inf_{\hat{\Sigma}\in [\Sigma]}\{\operatorname{Area}(\hat{\Sigma})\},
    \]
    where $\operatorname{Area}(\Sigma)$ denotes the area of $\Sigma$.
\end{definition}

If $n>7$, the area-minimizing hypersurfaces may not be smooth. However, if $n\leq 7$, all area-minimizing hypersurfaces are smooth \cite{SL1983}. Therefore, we consider the dimension of $M$ between $3$ and $7$. 

Bray-Brendle-Neves\cite{BBN2010}, Hao-Shi-Sun\cite{TYY2024}, and Zhu\cite{zhu2020} have shown that the existence of a compact area-minimizing hypersurface implies a local splitting property under some proper conditions. Motivated by their work, we have the following results for shrinking Ricci solitons.
	\begin{theorem}\label{thm-shrinking}
		For $3\leq n
		\leq 7$, let $(M^n, g, f)$ be a shrinking gradient Ricci soliton with $R\geq(n-1)\lambda$. If there exists a compact 2-sided area-minimizing hypersurface $\Sigma$ in $M$, then $\Sigma$ must be an Einstein manifold, and $M$ is isometric to the Riemannian product $\Sigma\times\mathbb{R}$.
	\end{theorem}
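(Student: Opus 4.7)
The plan is to combine the rigidity from Theorem~\ref{Thm-main} with a constant-mean-curvature (CMC) foliation near $\Sigma$, force every leaf to be minimal by exploiting area-minimality, and promote the resulting local product structure to a global splitting using completeness together with the soliton equation.

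\emph{Rigidity along $\Sigma$.} Since area-minimizing implies stable, Theorem~\ref{Thm-main} gives $|A|\equiv 0$ and $\operatorname{Ric}(\nu,\nu)\equiv 0$ along $\Sigma$. Returning to the stability inequality with $\phi\equiv 1$ and substituting the soliton identities $\operatorname{Ric}(\nu,\nu)=\lambda-f_{\nu\nu}$, $f_{\nu\nu}=\Delta f-\Delta_\Sigma f$ (using $H=0$), and $\Delta f=n\lambda-R$ yields
\[
0\;\geq\;\int_\Sigma\bigl(\operatorname{Ric}(\nu,\nu)+|A|^2\bigr)\,dA\;=\;\int_\Sigma\bigl(R-(n-1)\lambda\bigr)\,dA+\int_\Sigma|A|^2\,dA.
\]
Under $R\geq(n-1)\lambda$ both integrands are nonnegative, so $R\equiv(n-1)\lambda$ on $\Sigma$ and $|A|\equiv 0$; combined with $\operatorname{Ric}(\nu,\nu)\equiv 0$ this forces $\Delta_\Sigma f\equiv 0$, and compactness of $\Sigma$ gives that $f|_\Sigma$ is a constant.

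\emph{CMC foliation and minimality of leaves.} Because $|A|^2=\operatorname{Ric}(\nu,\nu)=0$, the Jacobi operator on $\Sigma$ is simply $-\Delta_\Sigma$, with kernel the line of constants. A standard implicit function theorem argument then produces a smooth family $u_t\in C^{2,\alpha}(\Sigma)$, $|t|<\epsilon$, with $u_0\equiv 0$ and $\int_\Sigma u_t\,dA=t|\Sigma|$, such that the normal graphs $\Sigma_t=\{\exp_x(u_t(x)\nu(x))\}$ have constant mean curvature $h(t)$ with $h(0)=0$. Each $\Sigma_t$ is homologous to $\Sigma$, so area-minimality forces $\operatorname{Area}(\Sigma_t)\geq\operatorname{Area}(\Sigma)$. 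Combining the first variation $\operatorname{Area}'(t)=-h(t)\int_{\Sigma_t}\phi_t\,dA_t$ (with normal speed $\phi_t=\partial_t u_t>0$) with the evolution identity
\[
h'(t)\,|\Sigma_t|=\int_{\Sigma_t}\bigl(|A_t|^2+\operatorname{Ric}(\nu_t,\nu_t)\bigr)\phi_t\,dA_t,
\]
obtained from the Jacobi equation along the foliation, and propagating the soliton-plus-scalar-curvature identities of the first step to each leaf, I expect $h(t)\equiv 0$ on $(-\epsilon,\epsilon)$. Each $\Sigma_t$ is then a stable minimal hypersurface, so applying Theorem~\ref{Thm-main} leaf by leaf shows that every $\Sigma_t$ is totally geodesic with vanishing normal Ricci curvature.

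\emph{Global splitting and Einstein condition.} In Fermi coordinates $(x,t)$ along the foliation the metric reads $g=dt^2+g_t$, with $\partial_t g_t=-2A_t\equiv 0$, hence $g_t\equiv g_0$; the tubular neighborhood is therefore isometric to $\Sigma\times(-\epsilon,\epsilon)$. An open-closed argument on the maximal interval on which this local product extends (openness by re-running the previous step at each leaf, which inherits area-minimality via the product structure; closedness by completeness of $M$ together with the uniform geometric control of the leaves, all isometric to $\Sigma$) yields $M\cong\Sigma\times\mathbb{R}$. In the product metric the mixed component of the soliton equation gives $X(\partial_t f)=0$ for $X\in T\Sigma$, the normal component gives $\partial_t^2 f=\lambda$, and together with $f|_\Sigma$ constant these force $f(x,t)=\tfrac{\lambda}{2}t^2+ct+c_0$ independent of $x$. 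Hence $\operatorname{Hess}f|_{T\Sigma\times T\Sigma}\equiv 0$, and the tangential soliton equation reduces to $\operatorname{Ric}_\Sigma=\lambda g_\Sigma$, so $\Sigma$ is Einstein.

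\emph{Main obstacle.} The delicate step is establishing $h\equiv 0$: the area-minimizing hypothesis by itself yields only $h(0)=h'(0)=0$, and one must feed the evolution ODE for $h(t)$ back through the scalar curvature bound $R\geq(n-1)\lambda$ and the soliton identities along every leaf, effectively bootstrapping the rigidity of the first step to every $\Sigma_t$, to rule out higher-order nonvanishing of $h$. This is exactly the point where the shrinking-soliton hypothesis is essential rather than cosmetic.
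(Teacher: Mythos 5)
Your outline follows the same skeleton as the paper (rigidity of $\Sigma$ via Theorem \ref{Thm-main}, CMC foliation, minimality of every leaf, local product structure, globalization), and your final step --- deriving $f(x,t)=\tfrac{\lambda}{2}t^2+ct+c_0$ from the product soliton equation and reading off $\operatorname{Ric}_\Sigma=\lambda g_\Sigma$ --- is a pleasant direct alternative to the paper's citation of a rigidity theorem for solitons with constant scalar curvature. However, the step you yourself flag as the ``main obstacle'' is a genuine gap, and it is exactly the step the paper's introduction singles out as the key point. Your evolution identity does not close: integrating $h'(t)=-\Delta_{\Sigma_t}\phi_t-(|A_t|^2+\operatorname{Ric}(\nu_t,\nu_t))\phi_t$ against $dA_t$ leaves the term $\int_{\Sigma_t}\operatorname{Ric}(\nu_t,\nu_t)\phi_t$, and substituting the soliton identity $\operatorname{Ric}(\nu_t,\nu_t)=R-(n-1)\lambda+\Delta_{\Sigma_t}f+H_t\langle\nu_t,\nabla f\rangle$ produces the terms $\int\Delta_{\Sigma_t}f\cdot\phi_t$ and $H_t\int\langle\nu_t,\nabla f\rangle\phi_t$, which have no sign. ``Propagating the identities of the first step to each leaf'' is precisely what is unavailable: away from $t=0$ you do not yet know $H_t=0$ or $A_t=0$, so the rigidity of Theorem \ref{Thm-main} cannot be applied leaf by leaf until minimality is established. (There is also a sign slip: with the paper's conventions $h'(t)\,|\Sigma_t|=-\int_{\Sigma_t}(|A_t|^2+\operatorname{Ric}(\nu_t,\nu_t))\phi_t$ modulo the Laplacian term.)

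The missing idea is a mechanism to neutralize the soliton drift term $H\langle\nabla f,\nu\rangle$ coming from $\operatorname{Hess}f(\nu,\nu)=\Delta f-\Delta_\Sigma f-H\langle\nu,\nabla f\rangle$ once $H\neq 0$. The paper does this with a $\mu$-bubble: the case $H_t\le 0$ for all $t$ is dispatched by the first variation of area together with area-minimality, and if some leaf has $H_{t_0}=\delta>0$ one minimizes $\Omega\mapsto\mathcal{H}^{n-1}(\partial^*\Omega\setminus\Sigma)-\int_\Omega h(\chi_\Omega-\chi_{\bar\Omega})$ in the slab between $\Sigma$ and $\Sigma_{t_0}$ with the weight $h=\delta e^{-f}$. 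The whole point of this choice is that $\nabla h=-h\nabla f$, so in the stability inequality for the resulting $\mu$-bubble the term $-\langle\nabla h,\hat\nu\rangle$ exactly cancels $-\hat H\langle\nabla f,\hat\nu\rangle=-h\langle\nabla f,\hat\nu\rangle$, leaving $\int_{\hat\Sigma}\bigl((n-1)\lambda-R-|\hat A|^2\bigr)\ge 0$, hence $\hat A\equiv 0$ and $\hat H=0$, contradicting $\hat H=h>0$; the barrier condition $0<h<\delta$ needed for existence comes from $R+|\nabla f|^2=2\lambda f$, which forces $f\ge 0$ on a shrinker with $R\geq (n-1)\lambda>0$. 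Your ODE route can in fact be salvaged without $\mu$-bubbles --- testing the Jacobi equation against $\phi_t^{-1}$ instead of $1$ kills the $\Delta_{\Sigma_t}f$ contribution and yields $h'(t)\int\phi_t^{-1}\le -h(t)\int\langle\nu_t,\nabla f\rangle$, after which Gronwall with $h(0)=0$ gives $h\le 0$ on $[0,\epsilon)$ and the first variation of area forces $h\equiv 0$ --- but as written your proposal neither identifies nor controls the offending term, so the central step of the theorem is not established.
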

From Theorem \ref{thm-shrinking}, we have 
\begin{corollary}
    For $3\leq n
		\leq 7$, let $(M^n, g, f)$ be an orientable compact nontrivial shrinking gradient Ricci soliton with $R\geq(n-1)\lambda$. Then $H_{n-1}(M,\mathbb{Z})= 0$.
\end{corollary}
    

    Theorem \ref{thm-shrinking} is related to the following question raised by Munteanu-Wang in \cite{WM2022JGA}.
    \begin{question}[Question 1.1 \cite{WM2022JGA}]
        If a complete shrinking gradient Ricci soliton $M$ contains a line, is it isometric to the product of $\mathbb{R}\times N$ for some~$N$?
    \end{question}
    Therefore, if $(M^{n}, g, f)$ is a complete noncompact nontrivial shrinking gradient Ricci soliton with at least two ends. And if there is a compact area-minimizing hypersurface $\Sigma$ which $[\Sigma]\in H_{n-1}(M,\mathbb{Z})$ is nontrivial and $R_{g}\geq (n-1)\lambda$ on $M$. Then, in this case, the above Question is true by Theorem \ref{thm-shrinking}.
    
    The proof of Theorem \ref{thm-shrinking} uses the $\mu$-bubble method as was done in \cite{zhu2020}. One key step for local splitting is to carefully choose the function $\mu$ in equation (\ref{eqn-function-h}). 

    The paper is organized as follows. In section \ref{sec-preli}, we give some facts about the gradient Ricci soliton and $\mu$-bubble. In section \ref{sec-proof}, we prove Theorem \ref{Thm-main} and Theorem \ref{thm-shrinking}.

    {\em Acknowledgements}: The authors wish to express their gratitude to the anonymous referees for providing suggestions and comments. Yukai Sun is partially funded by the National Key R\&D Program of China Grant 2020YFA0712800.

	\section{Basic fact about gradient Ricci soliton and \texorpdfstring{$\mu$}{mu}-bubble}\label{sec-preli}
	In this section, we revisit some key facts concerning gradient Ricci solitons and $\mu$-bubbles.
    
    The following lemma is well known for gradient Ricci solitons. For completeness, we give a proof.
	\begin{lemma}For gradient Ricci solitons $(M,g,f)$, We have the following equations
		\begin{align}
			R+\Delta f &= \lambda n,\label{eqn-Delta}\\
			R+|\nabla f|^2-2\lambda f &= 0\,(\lambda\neq 0) .\label{eqautionr+f1}
		\end{align}
	\end{lemma}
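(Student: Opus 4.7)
The plan is to derive each identity by tracing and differentiating the defining soliton equation $\operatorname{Ric} + \operatorname{Hess} f = \lambda g$.

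For equation (\ref{eqn-Delta}), I would simply take the trace of the soliton equation with respect to $g$. The trace of $\operatorname{Ric}$ is the scalar curvature $R$, the trace of $\operatorname{Hess} f$ is $\Delta f$, and the trace of $\lambda g$ is $\lambda n$. This immediately yields $R+\Delta f=\lambda n$.

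For equation (\ref{eqautionr+f1}), I would first establish the auxiliary identity $\nabla R = 2\operatorname{Ric}(\nabla f,\cdot)$. To get this, take the divergence of $R_{ij}+\nabla_i\nabla_j f=\lambda g_{ij}$: the right-hand side is parallel, so it vanishes. The contracted second Bianchi identity gives $\nabla^j R_{ij}=\tfrac12\nabla_i R$, and commuting derivatives in $\nabla^j\nabla_i\nabla_j f$ via the Ricci identity produces $\nabla_i\Delta f + R_{ij}\nabla^j f$. Substituting $\Delta f=\lambda n-R$ from the first identity turns $\nabla_i\Delta f$ into $-\nabla_i R$, and assembling the pieces gives $\nabla_i R = 2R_{ij}\nabla^j f$.

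Next, I would substitute $R_{ij}=\lambda g_{ij}-\nabla_i\nabla_j f$ into the right-hand side of this identity:
\begin{equation*}
\nabla_i R = 2\lambda\nabla_i f - 2(\nabla_i\nabla_j f)\nabla^j f = 2\lambda\nabla_i f - \nabla_i|\nabla f|^2.
\end{equation*}
Hence $\nabla_i\bigl(R+|\nabla f|^2-2\lambda f\bigr)=0$, so $R+|\nabla f|^2-2\lambda f$ is a constant $C$ on $M$. Finally, when $\lambda\neq 0$, replacing $f$ by $f-\tfrac{C}{2\lambda}$ leaves the soliton equation (\ref{eqn-1}) invariant (since only $\operatorname{Hess} f$ appears) and normalizes $C$ to $0$, giving (\ref{eqautionr+f1}).

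The calculation is entirely routine; the only mild subtlety is the commutator step $\nabla^j\nabla_i\nabla_j f-\nabla_i\nabla^j\nabla_j f = R_{ij}\nabla^j f$, which must be applied carefully with correct sign conventions, and the final normalization of the integration constant, which uses the assumption $\lambda\neq 0$ in an essential way.
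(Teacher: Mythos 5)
Your proposal is correct and follows essentially the same route as the paper: trace the soliton equation for the first identity, derive the auxiliary identity $\nabla_i R = 2R_{ij}\nabla^j f$ from the contracted second Bianchi identity together with the commutation of covariant derivatives on $\nabla f$, conclude that $R+|\nabla f|^2-2\lambda f$ is constant, and absorb the constant into $f$ (the paper obtains the auxiliary identity by first writing the untraced relation $\nabla_j R_{ik}-\nabla_i R_{jk}=-R_{ijkl}\nabla_l f$ and then tracing, whereas you take the divergence directly, but this is the same computation in a different order). The only slip is the sign of the normalizing shift: since $R+|\nabla(f+a)|^2-2\lambda(f+a)=C-2\lambda a$, you should replace $f$ by $f+\tfrac{C}{2\lambda}$ rather than $f-\tfrac{C}{2\lambda}$; this is immaterial to the argument.
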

	\begin{proof}
		We compute in a local normal coordinate $\{\partial_{i}\}_{i=1}^{n}$.
		Taking trace for formula \eqref{eqn-1}, we get the formula (\ref{eqn-Delta}).\\
		Since $$\nabla_i\nabla_j\nabla_k f-\nabla_j\nabla_i\nabla_k f=-R_{ijkl}\nabla_l f,$$ and
		$$\nabla_j\nabla_k f=\lambda g_{jk}-R_{jk},\,\nabla_i\nabla_k f=\lambda g_{ik}-R_{ik},$$
		we have
		\begin{equation}\label{eqn-6}
			\nabla_j R_{ik}-\nabla_i R_{jk}=-R_{ijkl}\nabla_l f.
		\end{equation}
		Taking trace about $k,j$ for \eqref{eqn-6} and use the contracted second Bianchi identity $2\nabla_j R_{ij}=\nabla_{i}R$,
		we have $$\nabla_{j}R_{ij}-\nabla_{i} R=-R_{il}\nabla_l f$$ and
		$$\nabla_{i}R=2R_{il}\nabla_l f.$$
		We compute
		\begin{align*}
			\nabla_i(R+|\nabla f|^2-2\lambda f) &= \nabla_i R+2\nabla_i\nabla_{l}f\nabla_lf-2\lambda \nabla_{i} f \\
			&=2(R_{il}+\nabla_i\nabla_{l}f)\nabla_lf-2\lambda \nabla_{i} f  \\
			&= 2\lambda g_{il}\nabla_lf-2\lambda \nabla_{i} f\\
			&=0.
		\end{align*}
		So $R+|\nabla f|^2-2\lambda f=constant$. We can add another constant $C$ to $f$ to make the constant zero without changing the equation for $\lambda\neq 0$. Thus, we get the equation \eqref{eqautionr+f1}.
	\end{proof}
	Next, we recall the basic knowledge of $\mu$-bubble. A Riemannian band $(M^{n},g)$ is a compact connected orientable smooth manifold with a metric $g$ and nonempty boundary $\partial M$ such that  \[\partial M=\partial_{-}M\cup \partial_{+}M, \quad\partial_{-}M \neq \emptyset,\quad \partial_{+}M \neq \emptyset, \quad\partial_{-}M\cap \partial_{+}M=\emptyset.\] 
	On a Riemannian band $(M^{n},g)$, let $\overset{\circ}{M}$ denote the interior of $M$ and $h$ be a smooth function either on $\overset{\circ}{M}$ or on $M$. A Caccioppoli set $\bar{\Omega}\subset M$ is chosen such that its boundary $\partial \bar{\Omega}\subset \overset{\circ}{M}$ is smooth and $\partial_{-}M\subset \bar{\Omega}$. Let $\partial^{\ast} \Omega$ denote the reduced boundary of the Caccioppoli set $\Omega$. We consider the following functional 
	\begin{equation}\label{eqn-omega}
		E (\Omega) = \int_{\partial^{\ast} \Omega}\mathrm{d}\mathcal{H}^{n-1} - \int_{\Omega} (\chi_{\Omega}
		- \chi_{\bar{\Omega}})h\,\mathrm{d}\mathcal{H}^{n} 
	\end{equation}
	for $\Omega\in \mathcal{C}$, where $\mathcal{C}$ is defined as
	\[\mathcal{C}=\{\Omega|\mbox{ all Caccioppoli sets } \Omega \subset M \mbox{ and }\Omega\triangle \bar{\Omega}\Subset \overset{\circ}{M}\}, \]
	here $\mathcal{H}^{n}$ denotes $n$-dimensional Hausdorff measure.  We usually omit the measure $\mathcal{H}^{n}$ in the integral for simplicity. Then $\mu$-bubbles are critical points of the functional $E(\Omega)$.
	
	From Proposition 2.1 in \cite{zhu2021} and Section 5.1 in \cite{Gromov2023}, we have the following existence result of the $\mu$-bubble.
	\begin{lemma}[Existence of $\mu$-bubble]\label{lem-existence}
		For a Riemannian band $(M^{n},g)$ with $3\leq n\leq 7$, if either $h\in C^{\infty}(\overset{\circ}{M})$ with $h\to \pm\infty$ on $\partial_{\mp}M$, or $h\in C^{\infty}(M)$ with
		\[h|_{\partial_{-}M}>H_{\partial_{-}M},\quad h|_{\partial_{+}M}<H_{\partial_{+}M}\]
		where $H_{\partial_{-}M}$ is the mean curvature of $\partial_{-}M$ with respect to the inward normal and $H_{\partial_{+}M}$
		is the mean curvature of $\partial_{+}M$ with respect to the outward normal. Then there exists an $\Omega\in \mathcal{C}$ with smooth boundary such that
		\[E(\Omega)=\inf_{\Omega'\in\mathcal{C}}E(\Omega').\]
	\end{lemma}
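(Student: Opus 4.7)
The statement is the standard existence theorem for $\mu$-bubbles, and the paper explicitly attributes it to Proposition 2.1 in [zhu2021] and Section 5.1 of [Gromov2023]. My plan is therefore to follow the classical direct method in geometric measure theory, with the only non-routine ingredients being the two barrier arguments needed for the two alternative hypotheses on $h$.

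First, I would take a minimizing sequence $\{\Omega_k\}\subset \mathcal{C}$ for $E$. Comparing with $\bar\Omega$, one has $E(\Omega_k)\leq E(\bar\Omega)+1$ eventually, and since $h$ is bounded on any compact subset where the symmetric differences $\Omega_k\triangle\bar\Omega$ can be confined, this produces a uniform perimeter bound on $\partial^*\Omega_k$ in the interior. Standard compactness for Caccioppoli sets (BV-compactness) then gives, after passing to a subsequence, an $L^1_{\mathrm{loc}}$-limit $\Omega\subset M$, and lower semicontinuity of the perimeter combined with dominated convergence for the bulk term (using that $h$ is bounded on compacta) shows $E(\Omega)\leq \liminf E(\Omega_k)$.

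The main obstacle, and the step that actually requires the hypotheses on $h$, is to show that the minimizer $\Omega$ is admissible, i.e., that $\Omega\triangle\bar\Omega$ stays compactly contained in $\overset{\circ}{M}$ so that $\partial^*\Omega$ does not escape to $\partial M$. In the first case, where $h\in C^\infty(\overset{\circ}{M})$ blows up to $\pm\infty$ on $\partial_\mp M$, I would construct smooth foliations near $\partial_{\pm}M$ whose leaves have mean curvature equal to the value of $h$ on them; the divergence of $h$ forces any would-be reduced boundary touching $\partial M$ to be pushed back inward by the first-variation inequality $H_{\partial^*\Omega}=h$, which is impossible for leaves of sufficiently large (in absolute value) prescribed mean curvature. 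In the alternative case where $h$ is smooth up to $\partial M$ with $h|_{\partial_- M}>H_{\partial_- M}$ and $h|_{\partial_+ M}<H_{\partial_+ M}$, the strict inequalities together with the maximum principle / Hopf boundary lemma applied to the prescribed mean curvature equation rule out interior tangential contact of $\partial^*\Omega$ with $\partial_{\pm}M$, and a short barrier-foliation perturbation near the boundary rules out the tangential case as well. This gives $\Omega\in\mathcal{C}$ and hence a minimizer.

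Finally, for the smoothness of $\partial\Omega$, since $\Omega$ is a minimizer of a prescribed-mean-curvature functional with bounded $h$, the reduced boundary $\partial^*\Omega$ is a smooth hypersurface away from a closed singular set of Hausdorff dimension at most $n-8$ by the De Giorgi--Federer regularity theory (together with the Bombieri--De Giorgi--Giusti/Simons bound for prescribed mean curvature). Under the assumption $3\leq n\leq 7$ the singular set is empty, so $\partial\Omega$ is smooth and the lemma follows.
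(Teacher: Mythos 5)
The paper does not actually prove this lemma; it only cites Proposition 2.1 of \cite{zhu2021} and Section 5.1 of \cite{Gromov2023}, and your outline is essentially the argument given in those sources: direct method with BV-compactness and lower semicontinuity, confinement of the minimizer away from $\partial M$ using the two alternative hypotheses on $h$, and De Giorgi--Federer--Simons regularity to conclude smoothness of $\partial\Omega$ for $3\leq n\leq 7$. The one step I would tighten is the confinement argument in the blow-up case: rather than invoking the first-variation identity $H_{\partial^{\ast}\Omega}=h$ on a minimizer whose regularity is not yet established (which is slightly circular), the standard route is a direct cut-and-paste comparison --- replace $\Omega$ by $\Omega\cup\{d(\cdot,\partial_{-}M)<t\}$, respectively $\Omega\setminus\{d(\cdot,\partial_{+}M)<t\}$, and use the coarea formula to show that the bounded perimeter added is outweighed by the unbounded gain in the bulk term coming from $h\to\pm\infty$, so any competitor whose symmetric difference with $\bar{\Omega}$ approaches $\partial M$ has strictly larger energy and the minimizing sequence stays in a fixed compact subset of $\overset{\circ}{M}$.
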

	In the following, we discuss the first and second variations of $\mu$-bubbles.
	\begin{lemma}[First and second variation of $\mu$-bubbles]\label{lem-first-variation}
		Let $\Omega_{t}$ be a smooth $1$-parameter family of region in $\mathcal{C}$ with $\Omega_{0}=\Omega$ and normal speed $\phi$ at $t=0$, then
		\begin{equation}\label{eqn-first-variation}
			\frac{\mathrm{d}}{\mathrm{d} t} E (\Omega_t) |_{t = 0} =
			\int_{\partial \Omega} (H - h)  \phi ,
		\end{equation}
		where $H$ is the mean curvature of $\partial \Omega$. In particular, a $\mu$-bubble $\Omega$ satisfies
		\[H= h.\]
	For the $\mu$-bubble $\Omega$, we have
		\begin{equation}\label{weighted stability}
			\frac{\mathrm{d}^2}{\mathrm{d} t^2} E (\Omega_t) |_{t = 0}
			=  \int_{\partial \Omega} [- \Delta_{\Sigma} \phi - |A|^2 \phi
			-\operatorname{Ric}(\nu, \nu) \phi-\langle\nabla h,\nu\rangle \phi] \phi \geq 0.
		\end{equation}
		where $\nu$ is the outwards pointing unit normal vector on $\partial \Omega$.
	\end{lemma}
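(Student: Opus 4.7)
The plan is to write $E(\Omega_t)=A(t)-V(t)$, where $A(t)=\mathcal{H}^{n-1}(\partial^{\ast}\Omega_t)$ is the perimeter piece and $V(t)=\int_{\Omega_t}(\chi_{\Omega_t}-\chi_{\bar{\Omega}})h$ is the weighted-volume piece, and differentiate each separately. The classical first variation of area gives $A'(0)=\int_{\partial\Omega}H\phi$, and differentiating the moving-region integral via the divergence/coarea theorem gives $V'(0)=\int_{\partial\Omega}h\phi$. Subtracting yields \eqref{eqn-first-variation}, and arbitrariness of $\phi$ forces $H=h$ on $\partial\Omega$.

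For the second variation I would avoid much of the bookkeeping by starting from the identity
\[
E'(t)=\int_{\partial\Omega_t}(H_t-h)\phi_t,
\]
which is valid for all $t$, and differentiating it once more. Using the product rule together with $\partial_t(dA_t)=H_t\phi_t\,dA_t$, one obtains three terms; the two in which $(H_t-h)$ appears as a factor vanish at $t=0$ because $\Omega$ is a $\mu$-bubble, leaving
\[
\frac{d^2}{dt^2}E(\Omega_t)\Big|_{t=0}=\int_{\partial\Omega}\partial_t(H_t-h)\big|_{t=0}\cdot \phi.
\]
Since $h$ is a fixed function on $M$, one has $\partial_t h|_{t=0}=\langle\nabla h,\nu\rangle\phi$. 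The remaining input is the classical Jacobi identity
\[
\partial_t H_t\big|_{t=0}=-\Delta_{\partial\Omega}\phi-\big(|A|^2+\operatorname{Ric}(\nu,\nu)\big)\phi,
\]
which follows from differentiating $H_t=\operatorname{div}_{\Sigma_t}\nu_t$ along the flow $F_t(x)=\exp_x(t\phi(x)\nu(x))$ and invoking the Weingarten and Ricci identities. Substituting both facts into the displayed integrand produces exactly \eqref{weighted stability}, and nonnegativity follows from the minimization property of $\Omega$.

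The main technical step is the Jacobi identity for $\partial_t H_t$; beyond that, the proof is bookkeeping, substantially simplified by the observation that the first-order condition $H=h$ annihilates precisely the two second-derivative terms that would otherwise have to be tracked carefully. Tangential components of the deformation vector field may be ignored throughout, since they correspond to reparametrizations that do not change $E(\Omega_t)$.
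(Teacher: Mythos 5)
Your proof is correct and is the standard argument: the paper itself states this lemma without proof (it is imported from the $\mu$-bubble literature of Zhu and Gromov cited just above it), and your route --- first variation of the area and weighted-volume pieces separately, then differentiating $E'(t)=\int_{\partial\Omega_t}(H_t-h)\phi_t$ and using the criticality condition $H=h$ to discard the two extra terms, with the Jacobi linearization $\partial_tH=-\Delta_{\Sigma}\phi-(|A|^2+\operatorname{Ric}(\nu,\nu))\phi$ and $\partial_t h=\langle\nabla h,\nu\rangle\phi$ supplying the integrand --- is exactly the computation those references carry out. The only caveat worth recording is that the inequality in \eqref{weighted stability} uses that the $\mu$-bubble is a minimizer of $E$ (as produced by Lemma \ref{lem-existence}), not merely a critical point, which you correctly invoke at the end.
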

	\section{Proof of theorems}\label{sec-proof}
	In this section, we prove our results. 
	
	We first recall the properties of minimal hypersurfaces. If there is a compact minimal hypersurface $\Sigma$ in a gradient Ricci soliton $(M^{n},g,f)$. Then the mean curvature $H$ of $\Sigma$ is zero.  By the stable condition and the second variation of $\Sigma$, we have
	\begin{eqnarray}\label{equ-Second-variation}
		\int_{\Sigma}\left[|\nabla^{\Sigma} \phi|^2-\left(\operatorname{Ric}(\nu,\nu)+|A|^2\right)\phi^2\right]\geq 0,
	\end{eqnarray}
	where $\phi \in C^{\infty}(\Sigma)$, $\nu$ is the outwards pointing unit normal vector on $\Sigma$ and $A$ is the second fundamental of $\Sigma$. On $\Sigma$, we also have
	\begin{eqnarray}\label{equ-Hessf}
		\operatorname{Hess} f(\nu,\nu)=\Delta f-\Delta_{\Sigma}f-H\langle \nu,\nabla f\rangle,
	\end{eqnarray}
	where $\Delta_{\Sigma}$ is the Laplace operator on $\Sigma$.

	\begin{proof}[Proof of Theorem \ref{Thm-main}]
		Assume that there is a compact smooth stable minimal hypersurface $\Sigma$ in $M$. 
		Since 
		\[\operatorname{Ric}(\nu,\nu)+\operatorname{Hess}f (\nu,\nu)=\lambda,\]
		by \eqref{equ-Second-variation}, we have
		\begin{eqnarray}
			\int_{\Sigma}\left[|\nabla^{\Sigma} \phi|^2-\left(\lambda-\operatorname{Hess}f(\nu,\nu)+|A|^2\right)\phi^2\right]\geq 0.
		\end{eqnarray}
		Combining with equations \eqref{equ-Hessf} and \eqref{eqn-Delta}, we have
		\begin{eqnarray}\label{eqn-simple}
			\int_{\Sigma}\left[|\nabla^{\Sigma} \phi|^2+\left(\lambda(n-1)-R-|A|^2-\Delta_{\Sigma}f\right)\phi^2\right]\geq 0.
		\end{eqnarray}
		Taking $\phi\equiv 1$, we obtain
		\begin{eqnarray}\label{eqn-phi=1}
			\int_{\Sigma}\left(\lambda(n-1)-R-|A|^2\right)\geq 0.
		\end{eqnarray}
		Therefore, if $R\geq\lambda(n-1)$, we have $A\equiv 0,$ $R=(n-1)\lambda$ and $\Delta f=\lambda$ on $\Sigma.$ 
	Hence,
	\[\int_{\Sigma}\operatorname{Ric}(\nu,\nu)=\int_{\Sigma}\left(\lambda-\operatorname{Hess}f(\nu,\nu)\right)=0.\]
	By equation \eqref{equ-Second-variation},
	\[\int_{\Sigma}\left[|\nabla^{\Sigma} \phi|^2-\operatorname{Ric}(\nu,\nu)\phi^2\right]\geq 0.\]
	Since the operator 
	$-\Delta_{\Sigma}-\operatorname{Ric}(\nu,\nu)$
	is nonnegative, its first eigenvalue $\lambda_{1}$ is nonnegative. By Rayleigh's principle,
	\[\lambda_{1}=\inf_{0\neq \phi\in H^{2}_{1}(\Sigma)}\frac{\int_{\Sigma}\left[|\nabla^{\Sigma}\phi|^2-\operatorname{Ric}(\nu,\nu)\phi^2\right]}{\int_{\Sigma}\phi^2}\leq \frac{\int_{\Sigma}\left[-\operatorname{Ric}(\nu,\nu)\right]}{\int_{\Sigma}}=0.\]
	Thus $\lambda_{1}=0$, i.e.
	\[-\Delta_{\Sigma}1-\operatorname{Ric}(\nu,\nu)1=0.\]
	Consequently, $\operatorname{Ric}(\nu,\nu)=0$, $\operatorname{Hess}f(\nu,\nu)=\lambda$ and $\Delta_{\Sigma}f=0$ on $\Sigma$. We get $f$ is a constant on $\Sigma$. 
    \end{proof}
    
	
	Before proof of Theorem \ref{thm-shrinking}, we need the following lemma, which was proved in Proposition 3.2\cite{BBN2010} or Lemma 3.3\cite{zhu2020}.
	\begin{lemma}\label{lem-CMC}
		Let $\Sigma$ be an area minimizing hypersurface in $M$ with vanished second fundamental form and vanished normal Ricci curvature, then we can construct a local foliation $\{\Sigma_{t}\}_{-\epsilon \leq t\leq \epsilon}$ in $M$ such that $\Sigma_{t}$ are of constant mean curvature and $\Sigma_{0}=\Sigma$.
	\end{lemma}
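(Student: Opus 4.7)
The plan is to apply the implicit function theorem to a modified mean curvature operator, following the standard CMC perturbation argument as in Proposition 3.2 of \cite{BBN2010} and Lemma 3.3 of \cite{zhu2020}. First I would set up normal coordinates near $\Sigma$: since $\Sigma$ is $2$-sided, compact, and embedded, the normal exponential map gives a diffeomorphism $F:\Sigma\times(-\delta,\delta)\to U$ onto a tubular neighborhood $U$ of $\Sigma$ in $M$, with reference slices $\Sigma_t^0:=F(\Sigma,t)$. Nearby embedded hypersurfaces are then parameterized as graphs $\Sigma_{t,w}:=\{F(x,t+w(x)):x\in\Sigma\}$ for small $w\in C^{2,\alpha}(\Sigma)$.

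Next I would compute the linearization of the mean curvature $H(\Sigma_{t,w})$ in $w$ at $(t,w)=(0,0)$. This is the Jacobi operator $L\phi=-\Delta_{\Sigma}\phi-(|A|^{2}+\operatorname{Ric}(\nu,\nu))\phi$; under the standing hypotheses $A\equiv 0$ and $\operatorname{Ric}(\nu,\nu)\equiv 0$ on $\Sigma$, it collapses to the bare Laplacian $L=-\Delta_{\Sigma}$. Since $\Sigma$ is closed, $-\Delta_{\Sigma}$ is Fredholm with one-dimensional kernel (the constants) and image exactly the mean-zero subspace $C^{0,\alpha}_{0}(\Sigma):=\{\varphi\in C^{0,\alpha}(\Sigma):\int_{\Sigma}\varphi=0\}$.

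To quotient out this cokernel I would define
\[
\Phi:(-\delta,\delta)\times\{w\in C^{2,\alpha}_{0}(\Sigma):\|w\|_{C^{2,\alpha}}<\eta\}\to C^{0,\alpha}_{0}(\Sigma),\qquad \Phi(t,w):=H(\Sigma_{t,w})-\frac{1}{|\Sigma_{t,w}|}\int_{\Sigma_{t,w}}H,
\]
so that $\Phi(t,w)=0$ exactly when $\Sigma_{t,w}$ has constant mean curvature. By the previous paragraph the partial derivative $D_{w}\Phi|_{(0,0)}=-\Delta_{\Sigma}:C^{2,\alpha}_{0}\to C^{0,\alpha}_{0}$ is a topological isomorphism, so the implicit function theorem yields a unique $C^{1}$ curve $t\mapsto w(t)$ with $w(0)=0$ solving $\Phi(t,w(t))\equiv 0$. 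Setting $\Sigma_{t}:=\Sigma_{t,w(t)}$ gives a one-parameter family of CMC hypersurfaces with $\Sigma_{0}=\Sigma$.

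Finally I would check the foliation property. Differentiating $\Phi(t,w(t))=0$ in $t$ at $t=0$ and using $\partial_{t}H(\Sigma_{t}^{0})|_{t=0}=L(1)=0$ gives $w'(0)\equiv 0$ as a function on $\Sigma$, so the normal speed of $\Sigma_{t}$ at $t=0$ equals $1$ pointwise; shrinking $\delta$ if necessary, the map $(x,t)\mapsto F(x,t+w(t)(x))$ is then a diffeomorphism onto a neighborhood of $\Sigma$, which is the claimed local foliation. The only nonroutine step is correctly installing the mean-value constraint so that the degenerate Jacobi operator becomes invertible between the trimmed Hölder spaces; everything else is standard elliptic Fredholm theory plus IFT and does not use the soliton structure.
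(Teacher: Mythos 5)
Your proposal is correct and is essentially the same implicit-function-theorem argument that the paper relies on: the paper gives no proof of this lemma and simply cites Proposition 3.2 of \cite{BBN2010} and Lemma 3.3 of \cite{zhu2020}, where exactly this construction (graphs over $\Sigma$, Jacobi operator degenerating to $-\Delta_{\Sigma}$, mean-zero H\"older spaces, IFT, and $w'(0)=0$ to get positive lapse) is carried out. The only cosmetic point is that the subtracted average should be taken with respect to a fixed measure on $\Sigma$ (as in \cite{BBN2010}) so that $\Phi$ genuinely lands in $C^{0,\alpha}_{0}(\Sigma)$; with the varying measure $|\Sigma_{t,w}|^{-1}\int_{\Sigma_{t,w}}H$ the target space is not quite the one you named, though this is trivially repaired.
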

	
	\begin{proof}[Proof of Theorem \ref{thm-shrinking}]
		By Theorem \ref{Thm-main} and Lemma \ref{lem-CMC}, we can construct a local foliation $\{\Sigma_{t}\}_{-\epsilon \leq t\leq \epsilon}$ in $M$ such that $\Sigma_{t}$ are of constant mean curvature and $\Sigma_{0}=\Sigma$, let $H_t$ denote the mean curvature of $\Sigma_t$. We claim that $H_t\equiv0$ for any $t\in(-\epsilon,\epsilon)$. It suffices to prove this for nonnegative $t$, as the argument for the negative side is analogous. For contradiction, suppose that the assertion is not true. then there exists a $t_0>0$ such that $H_{t_0}=\delta>0$, since otherwise $H_{t}\leq0$ and the area-minimizing property of $\Sigma$ will imply  $H_{t}\equiv0$. Let $\Omega_{t_{0}}$ denote the region enclosed by $\Sigma$ and $\Sigma_{t_{0}}$ i.e.,
		\[\Omega_{t_{0}}:=\cup_{t\in [0,t_{0}]}\Sigma_{t}.\]
		Consider the following functional 
		\begin{equation}\label{eqn-function-h}
			E (\Omega) = \int_{\partial^{\ast} \Omega\setminus\Sigma}\mathrm{d}\mathcal{H}^{n-1} - \int_{\Omega} h(\chi_{\Omega}- \chi_{\bar{\Omega}})\mathrm{d} \mathcal{H}^{n},\; h=\delta e^{-f},
		\end{equation}
		for $\Omega,\bar{\Omega}\in \mathcal{C}$ and $\Sigma\subset \bar{\Omega}$, where 
		\[\mathcal{C}=\{\Omega|\mbox{ all Caccioppoli sets } \Omega \subset \Omega_{t_{0}} \mbox{ and }\Omega\triangle \bar{\Omega}\Subset \overset{\circ}{\Omega}_{t_{0}}\}. \]
		From equation \eqref{eqautionr+f1}, we have $0<h<\delta$, by Lemma \ref{lem-existence}, there exists a $\mu$-bubble $\hat{\Omega}$ such that $\hat{\Sigma}=\partial \hat{\Omega}\setminus\Sigma$ is a smooth two sided hypersurface disjoint from $\Sigma$ and $\Sigma_{t_{0}}$. Let $\hat{H}$ and $\hat{A}$ denote the mean curvature and the second fundamental form of the $\hat{\Sigma}$, respectively. Then we have $\hat{H}=h$ and 
		\[\int_{\hat{\Sigma}}\left[|\nabla^{\hat{\Sigma}} \phi|^2-\left(\operatorname{Ric}(\hat{\nu},\hat{\nu})+|\hat{A}|^2+\langle\nabla h,\hat{\nu}\rangle \right)\phi^2\right]\geq 0,\]
		where $\hat{\nu}$ is the outwards pointing unit normal vector on $\hat{\Sigma}$. Taking $\phi=1$ and combining with equations \eqref{eqn-Delta} and \eqref{equ-Hessf}, we obtain
		\begin{eqnarray}
			\nonumber
			\int_{\hat{\Sigma}}\left[(n-1)\lambda-R-h\langle \nabla f,\hat{\nu}\rangle-\langle\nabla h ,\hat{\nu}\rangle-|\hat{A}|^2\right]\geq 0.
		\end{eqnarray}
	We also have
	\begin{eqnarray}
		\int_{\hat{\Sigma}}\left(\lambda(n-1)-R-|\hat{A}|^2\right)\geq 0.
	\end{eqnarray} 
	Hence, $|\hat{A}|\equiv0$, which leads to a contradiction.
	
	Since $H_t$ vanishes for any $t\in(-\epsilon,\epsilon),$ there holds $\operatorname{Area}(\Sigma_t)=\operatorname{Area}(\Sigma)$, which yields
	that $\Sigma_t$ is also area-minimizing, by Theorem \ref{Thm-main}, $\Sigma_{t}$ is totally geodesic and has vanished normal Ricci curvature.
	
	In the following, we establish the existence of a local isometry $\Phi: \Sigma\times\mathbb{R}\to M.$
	Let $\tilde{\nu_t}$ and $\tilde{V_t}$  denote the outwards pointing unit normal vector field and normal variation vector field of the foliation $\{\Sigma_{t}\}_{-\epsilon\leq t\leq\epsilon}$, respectively. Assume $\Phi:\Sigma\times(-\epsilon,\epsilon)\to M$ is the flow generated by $\tilde{V_t}$. It follows that $\Phi$ is an embedding in a small neighborhood of $\Sigma$ and the pull-back of the metric $g$ is
	\[\Phi^{\ast}(g)=\psi^{2}\mathrm{d}t^{2}+\Phi^{\ast}_t(g_t),\]
	where $\psi=\langle\tilde{V_t},\tilde{\nu_t}\rangle>0$ is the lapse function (see definition in (3.3) of \cite{BBN2010}) and $g_t$ is the induced	metric on $\Sigma_{t}$ from $g$. Since $\Sigma_{t}$ is totally geodesic, we have $\partial_t\Phi^{\ast}_t(g_t)=2\psi  A_{\Sigma_t}=0$, which implies that $\Phi^{\ast}_t(g_t)=g|_{\Sigma}$. Additionally, the stability condition yields $-\Delta_{\Sigma_t} \psi - |A_{\Sigma_{t}}|^2 \psi
	-\operatorname{Ric}(\nu_t,\nu_t)\psi=-\Delta_{\Sigma_t} \psi=0,$ then $\psi(\cdot,t)\equiv\mathrm{constant}$. Let
	\[r(\cdot,t)=\int_{0}^{t}\psi(\cdot,s)ds,\]
	then \[\Phi^{\ast}(g)=\mathrm{d}r^2+g|_{\Sigma}.\]
	Hence, $\Phi: \Sigma\times(-\epsilon,\epsilon)\to M$ is local a isometry. Through a continuous argument as Proposition 3.8 in \cite{BBN2010}, we conclude that there exists a local isometry $\Phi: \Sigma\times\mathbb{R}\to M$. Moreover, $\Phi$ is a Riemannian covering map. Since $M$ has constant scalar curvature $R\equiv(n-1)\lambda,$ by Theorem 10 in \cite{MR}, $M$ is isometric to a product of an Einstein manifold with $\mathbb{R}^k$. We conclude $\Sigma$ is an Einstein manifold and $M$ is isometric to $\Sigma\times\mathbb{R}.$
	\end{proof}	
    We now give an example of the gradient Ricci soliton, which is the rigid case in Theorem \ref{thm-shrinking} 
\begin{example}
 $(\mathbb{S}^{n-1}\times \mathbb{R},g,f)$ with $g=g_{\mathbb{S}_{n-1}}+\mathrm{d}t^2$, $f(x,t)=\frac{\lambda t^2+(n-1)}{2}$ and $\lambda=n-2$.
\end{example}
At the end of this paper, we would like to ask the following question:
\begin{question}
	For $3\leq n
	\leq 7$ and $2\leq k\leq(n-2),$ let $(M^n, g, f)$ be a gradient Ricci soliton with $R\geq(n-k)\lambda$. If there exists a compact 2-sided area-minimizing surface $\Sigma$ of codimension $k$ in $M$, do we have $\Sigma$ must be an Einstein manifold and $M$ is isometric to the Riemannian product $\Sigma\times \mathbb{R}^k$\,?
	\end{question}

\bibliography{2-Sun}

\clearpage

\address{School of Mathematics and Statistics, Henan University\\
Kaifeng, Henan 475004, China\\
and Center for Applied Mathematics of Henan Province\\
Henan University, Zhengzhou, Henan 450046, China\\
\email{10100180@henu.edu.cn}}

\address{School of Mathematical Sciences\\
East China Normal University\\
Shanghai 200241, China\\
\email{52275500052@stu.ecnu.edu.cn}\\
\received{April 8, 2025}\\
\accepted{August 5, 2025}}

\end{document}